
\documentclass{article}
\usepackage{amsfonts}
\usepackage{amssymb}
\usepackage{amsmath}

\setcounter{MaxMatrixCols}{10}


\textwidth 16cm \textheight 22cm
\oddsidemargin -.25in
\evensidemargin -.25in \topskip 0cm \footskip 1cm
\newtheorem{theorem}{Theorem}

\setcounter{page}{1}

\newtheorem{corollary}[theorem]{Corollary}

\newtheorem{lemma}[theorem]{Lemma}

\newtheorem{remark}[theorem]{Remark}

\newenvironment{proof}[1][Proof]{\textbf{#1.} }{\ \rule{0.5em}{0.5em}}
\begin{document}

\title{\textbf{Mean-Square Continuity on Homogeneous Spaces of Compact Groups%
\thanks{%
We thank S. Trapani for a crucial input to the content of Section \ref%
{s:homs}; we are also grateful to P.Baldi, O. El-Dakkak, N.Leonenko and
H.-J.Starkloff for useful discussions. Research by DM\ is supported by the
ERC Grant 277742 Pascal.}}}
\author{D.Marinucci and G.Peccati \\
Department of Mathematics, University of Rome Tor Vergata\\
and Luxembourg University, Unit\'{e} de Recherche en Math\'{e}matiques}
\maketitle

\begin{abstract}
We show that any finite-variance, isotropic random field on a compact group
is necessarily mean-square continuous, under standard measurability
assumptions. The result extends to isotropic random fields defined on
homogeneous spaces where the group acts continuously.

\begin{itemize}
\item \textbf{Keywords and Phrases: }Random Processes; Isotropy; Mean-Square
Continuity.

\item \textbf{AMS Classification: }60G05, 60G60\textbf{\ }
\end{itemize}
\end{abstract}

\section{Introduction}

The analysis of the spectral representations of stationary and isotropic
finite-variance random fields on (subsets of) $\mathbb{R}^{d}$ is a
classical topic of probability theory, presented in many standard textbooks
in the area (see for instance \cite{Adler, adlertaylor, Leon, Yad}). Such
representations are sometimes based on Karhunen-Lo\`eve constructions
(specially under Gaussianity assumptions), realized by first computing the
eigenfunctions associated with the covariance kernel, and then by expanding
the field into these orthogonal components (see for instance \cite%
{adlertaylor}). In other cases, the argument proceeds from the construction
of an isometry between an $L^{2}$ space of deterministic square integrable
functions, and some space of finite-variance random variables, with inner
product defined in terms of the covariance function of the process to be
represented (see for instance \cite{Leon, Yad}). In all these approaches,
mean-square continuity is assumed as a necessary condition to ensure that
the spectral representation holds pointwise.

More recently, considerable attention has been drawn to the case where the
process at hand is defined on the homogenous space of a compact group
(including the group itself). In this context, one of the most relevant
examples for applications is the sphere $S^{2},$ which is well-known to be
isomorphic to the quotient space $SO(3)/SO(2),$ where $SO(d)$ denotes as
usual the special group of rotations in $\mathbb{R}^{d}.$ Under these
circumstances, spectral representation results take a particularly neat
form, as they can be viewed as stochastic versions of the celebrated
Peter-Weyl Theorem (see \cite[Section 4.6]{DuiKolk} or \cite[Section 2.5]%
{MarPecBook}); the latter ensures that the matrix coefficients of the
irreducible representations of a compact group $G$ provide an orthonormal
basis for the space $L^{2}(G)$ of square-integrable functions on the group,
endowed with the Haar measure. This is the standpoint adopted for instance
by \cite{MarPecBook, PePy} -- see also \cite{BMV}. It should be noted that
random fields on the unit sphere $S^{2}$ play now an extremely important
role in many applied fields, for instance in Cosmology -- see \cite%
{dodelson, durrer}, \cite{MarPecBook} for an overview.

As we shall point out in the sections to follow, the argument based on the
group-theoretic point of view does not only provide an alternative proof for
classical results, but yields also an unexpected bonus: the assumption of
mean-square continuity turns out to be no longer necessary for the spectral
representation to hold. More than that, mean-square continuity follows as a
necessary consequence of the spectral representation, under standard
measurability conditions. The aim of this short note is then to highlight
this result, which we can state concisely as follows:

\begin{quote}
\textsl{Let $T$ be a measurable finite-variance isotropic random field
defined on the homogeneous space of a compact group acting continuously.
Then, $T$ is necessarily mean-square continuous.}
\end{quote}

Our main findings are contained in the statements of Theorem \ref{t:msc}
(for fields defined on compact groups) and of Theorem \ref{t:msch} (for
fields defined on homogeneous spaces). Section 2 and Section 3 contain
preliminary results, respectively on group representations and random
fields. Some historical remarks are provided in Section 6.

\section{Preliminaries on group representations}

We now provide a brief overview of the results about group representations
that are used in this note. For any unexplained definition or result, the
reader is referred to \cite[Chapter 2]{MarPecBook}, as well as to the
classic reference \cite{DuiKolk}. \medskip

A \textit{topological group }is a pair $\left( G,\mathbb{G}\right) $, where $%
G$ is a group and $\mathbb{G}$ is a topology such that the following three
conditions are satisfied: (i) $G$ is a Hausdorff topological space, (ii) the
multiplication $G\times G\to G:\left( g,h\right) \mapsto gh$ is continuous,
(iii) the inversion $G\to G:g\mapsto g^{-1}$ is continuous. In what follows,
we use the symbol $G$ to denote a topological group (the topology $\mathbb{G}
$ being implicitly defined) which is also \textit{compact } and such that $%
\mathbb{G}$ has a countable basis. We will denote by $C\left( G\right) $ the
class of continuous, complex-valued functions on $G$; $\mathcal{G}$ is the
(Borel) $\sigma $-field generated by $\mathbb{G}$; $dg$ denotes the
normalized \textit{Haar measure} of $G$ (in particular, $\int_G dg = 1$). We
shall denote by $L^{2}\left( G,\mathcal{G}, dg\right) =L^{2}\left( G\right) $
the Hilbert space of complex-valued functions on $G$ that are
square-integrable with respect to $dg$. Plainly, the space $L^{2}\left(
G\right) $ is endowed with the usual inner product $\left\langle
f_{1},f_{2}\right\rangle _{G}=\int_{G}f_{1}\left( g\right) \overline{%
f_{2}\left( g\right) }dg$; also, $\left\Vert \cdot \right\Vert _{G} =
\left\langle \cdot ,\cdot \right\rangle^{1/2} _{G}$. \medskip

Let $X$ be a topological space. A \textit{continuous left action} of $G$ on $%
X$ is a jointly measurable mapping $A : G\times X : (g,x) \mapsto A(g,x) :=
g\cdot x$, satisfying the following properties: (i) $g\cdot (h\cdot x) =
(gh)\cdot x$, (ii) if $e$ is the identity of $G$, then $e\cdot x = x$ for
every $x\in X$, and (iii) the mapping $(g,x)\to g\cdot x$ is jointly
continuous. Right actions are defined analogously and will not be directly
considered in this note, albeit every result concerning left actions proved
below extends trivially to right actions. The space $X$ is called a $G$-%
\textit{homogeneous space} if $G$ acts transitively on $X$, that is: for
every $y,x\in X$, there exists $g\in G$ such that $y = g\cdot x$. Group
representations (as described in the next paragraph) are distinguished
examples of group representations.

\medskip

Let $V$ be a normed finite-dimensional vector space over $\mathbb{C}$. A
(finite-dimensional) \textit{representation }of $G$ in $V$ is an
homomorphism $\pi $, from $G$ into $\mathbf{GL}\left( V\right) $ (the set of
complex isomorphisms of $V$ into itself), such that the mapping $G\times V
\to V:\left( g,v\right) \mapsto \pi \left( g\right) (v) $ is continuous.
Using e.g. \cite[Proposition 2.25]{MarPecBook}, one sees that it is always
possible to endow $V$ with an inner product $\langle \cdot, \cdot\rangle_V$
such that $\pi$ is \textit{unitary} with respect to it, that is: for every $%
g\in G$ and every $u,v\in V$, $\langle \pi(g)u, \pi(g)v\rangle_V = \langle
u, v\rangle_V$. Note that $\langle \cdot, \cdot\rangle_V$ can be chosen in
such a way that the associated norm preserves the topology of $V$ (see \cite[%
Corollary 4.2.2]{DuiKolk}). The \textit{dimension }$d_{\pi }$ of a
representation $\pi $ is defined to be the dimension of $V$. A
representation $\pi $ of $G$ in $V$\textbf{\ }is \textit{irreducible}, if
the only closed $\pi \left( G\right) $-invariant subspaces of $V$ are $%
\left\{ 0\right\} $ and $V$. It is well-known that unitary irreducible
representations are defined up to equivalence classes (see \cite[p. 25]%
{MarPecBook}). We will denote by $\left[ \pi \right] $ the equivalence class
of a given unitary irreducible representation $\pi $; the set of equivalence
classes of unitary irreducible representations of $G$ is written $\widehat{G}
$, and it is called the \textit{dual }of $G$. We recall that, according e.g.
to \cite[Theorem 4.3.4 (v)]{DuiKolk}, since $G$ is second countable (and
therefore metrizable) $\widehat{G}$ is necessarily countable.

\medskip

To every $\left[ \pi \right] \in \widehat{G}$ we associate a
finite-dimensional subspace $M_{\pi }\subseteq L^{2}\left( G\right) $ in the
following way. Select an element $\pi :G\mapsto \mathbf{GL}\left( V\right) $
in $\left[ \pi \right] $, as well as an orthonormal basis $\mathbf{e}%
=\left\{ e_{1},...,e_{d_\pi}\right\} $ of $V$. The space $M_{\pi }$ is
defined as the finite-dimensional complex vector space spanned by the
functions 
\begin{equation*}
g\mapsto \pi_{i,j} \left( g\right):= \langle \pi(g)e_j, e_i\rangle_V, \quad
i,j=1,...,d_\pi \text{.}
\end{equation*}
Note that such a definition is well given, since $M_{\pi }$ does not depend
on the choices of the representative element of $\left[ \pi \right] $ and of
the orthonormal basis of $V$. The following three facts are relevant for the
subsequent discussion: (i) $\{\sqrt{d_\pi}\pi_{ij} : i,j=1,...,d_\pi\}$ is
an orthonormal system of $L^2(G)$ (see \cite[p. 34]{MarPecBook}); (ii) $\dim
M_{\pi }=d_{\pi }^{2}$, and (iii) $M_{\pi }\subseteq C\left( G\right) $, for
every $\left[ \pi \right] \in \widehat{G}$.

\medskip

To conclude, we recall that, if two representations $\pi$ and $\pi^{\prime }$
are not equivalent, then $M_\pi$ and $M_{\pi^{\prime }}$ are orthogonal in $%
L^2(G)$. One crucial element of our discussion is the celebrated \textit{%
Peter-Weyl Theorem} (see \cite[Section 2.5]{MarPecBook}), stating that 
\begin{equation}  \label{e:pw}
L^2(G) = \bigoplus_{[\pi]\in \widehat{G}} M_\pi,
\end{equation}
that is: the family of finite-dimensional spaces $\{M_\pi : [\pi]\in 
\widehat{G}\}$ constitutes an orthogonal decomposition of $L^2(G)$. In
particular, the class $\{\sqrt{d_\pi}\pi_{ij} : [\pi]\in \widehat{G}, \,
i,j=1,...,d_\pi\}$ is an orthonormal basis of $L^2(G)$. Plainly, the
orthogonal projection of a given function $f\in L^2(G)$ on the space $M_\pi$
is given by the mapping 
\begin{equation}  \label{e:op}
g\mapsto f^\pi(g) = \sum_{i,j=1}^{d_\pi} d_\pi \int_G f(h) \bar{\pi}%
_{i,j}(h)dh \times \pi_{i,j}(g) = d_\pi \int_G f(h)\chi_{\pi}(h^{-1}g)dh,
\end{equation}
where $\chi_\pi(g) = \mathrm{Trace}\, \pi(g)$ is the \textit{character} of $%
\pi$. See \cite[Section 2.4.5]{MarPecBook} for a discussion of the basic
properties of group characters; in particular, one has that two equivalent
representations have the same character, in such a way that the projection
formula (\ref{e:op}) is well-defined, in the sense that it does not depend
on the choice of the representative element of the equivalence class $[\pi]$%
. \medskip

From now on, we shall fix a topological compact group $(G, \mathbb{G})$, and
freely use the notation and terminology introduced above.

\section{General setting and spectral decompositions}

\label{s:pw}

Let $T = \left\{ T(g) : \text{ }g\in G\right\} $ be a finite variance,
isotropic random field on $G$, by which we mean that $T$ is a real-valued
random mapping on $G$ verifying the following properties \textbf{(a)}--%
\textbf{(c)}.

\begin{itemize}
\item[\textbf{(a)}] (\textit{Joint measurability}) The field $T$ is defined
on a probability space $(\Omega ,\mathcal{F} ,P)$, and the mapping $T:G
\times \Omega \rightarrow \mathbb{R} :(g,\omega )\mapsto T(g,\omega )$ is $%
\mathcal{G} \otimes \mathcal{F} $-measurable, where (as before) $\mathcal{G}$
denotes the Borel $\sigma$-field associated with $(G, \mathbb{G})$.

\item[\textbf{(b)}] (\textit{Isotropy}) The distribution of $T$ is invariant
in law with respect to the action of $G$ on itself. This means that, for
every $h\in G$, $T(hg)\overset{d}{=}T(g),$ where \textquotedblleft $\overset{%
d}{=}$\textquotedblright\ indicates equality in distribution in the sense of
stochastic processes, that is: for every $d\geq 1$ and every $%
g_{1},...,g_{d}\in G$, the two vectors $(T(g_{1}),...,T(g_{d}))$ and $%
(T(hg_{1}),...,T(hg_{d}))$ have the same distribution. Note that, since the
application $g\mapsto hg$ is continuous, then the mapping $(g,\omega)
\mapsto T(hg,\omega)$ is jointly measurable, in the sense of Point \textbf{%
(a)} above.

\item[\textbf{(c)}] (\textit{Finite variance}) The field $T$ has finite
variance, i.e.: $ET^{2}(g)=\int T^{2}(g,\omega )dP(\omega )<\infty ,$ for
every $g\in G.$ For notational simplicity, and without loss of generality,
we will assume in the sequel that $ET(g)=0.$
\end{itemize}

Under the previous assumptions and by virtue of the invariance properties of
Haar measures, one has that, for every fixed $g_{0}\in G$, $E[T^{2}(g_{0})]=E%
\left[ \int_{G}T^{2}(g)dg\right] <\infty $. This implies that there exists a 
$\mathcal{F}$-measurable set $\Omega ^{\prime }$ of $P$-probability 1 such
that, for every $\omega \in \Omega ^{\prime }$, the mapping $T(\cdot ,\omega
):g\mapsto T(g,\omega )$ is an element of $L^{2}(G)$. For every $[\pi ]\in 
\widehat{G}$, we now define the quantity $T^{\pi }(g,\omega )$ according to (%
\ref{e:op}), whenever $\omega \in \Omega ^{\prime }$, and we set $T^{\pi
}(g,\omega )=0$ otherwise. It is easily checked that, for every $[\pi ]\in 
\widehat{G}$, the mapping $(g,\omega )\mapsto T^{\pi }(g,\omega )$ is $%
\mathcal{G}\otimes \mathcal{F}$ measurable.

\medskip

According to the results discussed in \cite[Section 5.2.1]{MarPecBook}, the
following two facts take place.

\begin{itemize}
\item[--] The class $\{T,T^{\pi} : [\pi] \in \widehat{G}\}$ is an isotropic
(possibly infinite-dimensional) square-integrable centered field over $G$,
that is: every $T^{\pi}$ is centered and square-integrable, and for every $%
m,d\geq 1$, for every $[\pi_1],...,[\pi_m]\in \widehat{G}$ and every $h,
g_1,...,g_d\in G$, the $(m+1)d$-dimensional vector 
\begin{equation*}
\big(T(g_1),...,T(g_d); \, T^{\pi_i}(g_1),...,T^{\pi_i}(g_d), i=1,...,m\big)
\end{equation*}
has the same distribution as 
\begin{equation*}
\big(T(hg_1),...,T(hg_d); \, T^{\pi_i}(hg_1),...,T^{\pi_i}(hg_d), i=1,...,m%
\big)
\end{equation*}

\item[--] Let $\{[\pi_k] :k=1,2,...\}$ be any enumeration of $\widehat{G}$.
Then, for every fixed $g_0\in G$ one has that

\begin{equation}
\lim_{n\rightarrow \infty }E\left\{\left\vert
T(g_0)\!-\!\sum_{k=1}^{n}T^{\pi_k}(g_0)\right\vert^2 \right\} =
\lim_{n\rightarrow \infty }E\left\{ \int_{G}\left\vert
T(g)-\sum_{k=1}^{n}T^{\pi_k}(g) \right\vert ^{2}dg\right\} =0\text{,}
\label{e:convg}
\end{equation}
in other words: the sequence $\sum_{k=1}^{n}T^{\pi_k}$, $n\geq 1$,
approximates $T$ in the $L^2(P)$ sense, both for every fixed element of $G$,
and in the sense of the space $L^2(G)$. Note that the equality in formula (%
\ref{e:convg}) is a consequence of the invariance and finiteness properties
of the Haar measure $dg$ and of the isotropy of $\{T,T^{\pi} : [\pi]\in 
\widehat{G}\}$, yielding

\begin{equation*}
E\left\{ \left\vert T(g_{0})\!-\!\sum_{k=1}^{n}T^{\pi
_{k}}(g_{0})\right\vert ^{2}\right\} \!=\!E\left\{ \int_{G}\left\vert
T(gg_{0})\!-\!\sum_{k=1}^{n}T^{\pi _{k}}(gg_{0})\right\vert
^{2}\!\!\!dg\right\} \!=\!E\left\{ \int_{G}\left\vert
T(g)\!-\!\sum_{k=1}^{n}T^{\pi _{k}}(g)\right\vert ^{2}\!\!\!dg\right\} .
\end{equation*}
\end{itemize}

The proof of the isotropy of $\{T^{\pi }:[\pi ]\in \widehat{G}\}$ is given
in \cite{MarPecBook}, see Proposition 5.3 on pages 116-117. It should be
noted that the proof of this Proposition implicitly exploits the fact that,
under isotropy, for every $h\in G$ the scalar products of $T(\cdot )$ and $%
T^{h}(\cdot ):=T(h\cdot )$ with any continuous function have necessarily the
same distribution (we thank P. Baldi for raising this point). This result is
trivial under mean-square continuity, but in the present general
circumstances it is a bit less obvious, and hence we report here a proof for
completeness.

\begin{lemma}
Let $T$ be an a.s. square-integrable invariant random field on $\mathcal{G}$
and define, for $f\in L^{2}(\mathcal{X})$, 
\begin{equation}
T(f):=\int_{G}T(x)\overline{f(x)}\,dx  \label{gen-coeff}
\end{equation}%
Then, for every $h\in G$ and every $f_{1},\dots ,f_{m}\in L^{2}(G)$, the two
random variables 
\begin{equation*}
(T(f_{1}),\dots ,T(f_{m}))\text{ and }(T^{h}(f_{1}),\dots ,T^{h}(f_{m}))
\end{equation*}%
have the same distribution.
\end{lemma}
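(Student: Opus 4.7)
The plan is to show that, for every $f \in L^{2}(G)$, the random variable $T(f)$ lies in the closed linear subspace $H \subset L^{2}(P)$ generated by the raw point evaluations $\{T(g) : g \in G\}$. This reduces distributional questions about the integrated functionals $T(f_1),\dots,T(f_m)$ to distributional questions about finite linear combinations $\sum_i a_i T(g_i)$, which are directly controlled by the finite-dimensional isotropy assumption \textbf{(b)}.

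To check that $T(f) \in H$, I would pick any $Y \in H^{\perp}$ and apply Fubini (justified by $E[\|T\|_{L^{2}(G)}^{2}] < \infty$ and the Cauchy--Schwarz bound $E[|Y|\,\|T\|_{L^{2}(G)}] \le \|Y\|_{L^{2}(P)}\sqrt{E[\|T\|_{L^{2}(G)}^{2}]}$) to get $E[Y\cdot T(f)] = \int \overline{f(x)}\,E[Y T(x)]\, dx = 0$; thus $T(f) \perp H^{\perp}$ as required. Consequently, for each $n$ one can choose a common finite family $g_{1}^{(n)},\dots,g_{N_{n}}^{(n)}\in G$ and scalars $a_{ji}^{(n)}$ such that $X_{n}^{(j)} := \sum_{i} a_{ji}^{(n)} T(g_{i}^{(n)})$ converges in $L^{2}(P)$ to $T(f_{j})$ for every $j=1,\dots,m$. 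Let $X_{n}^{h,(j)} := \sum_{i} a_{ji}^{(n)} T(h g_{i}^{(n)})$. The finite-dimensional isotropy applied to $\{g_{1}^{(n)},\dots,g_{N_{n}}^{(n)}\}$ then gives, for each fixed $n$, the joint equality in law $(X_{n}^{(j)})_{j} \stackrel{d}{=} (X_{n}^{h,(j)})_{j}$.

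The key step is to show $X_{n}^{h,(j)} \to T^{h}(f_{j})$ in $L^{2}(P)$. Assumption \textbf{(b)} forces the covariance kernel $R(x,y) := E[T(x) T(y)]$ to be left-invariant, i.e.\ $R(hx,hy) = R(x,y)$; this yields $\|X_{n}^{h,(j)} - X_{m}^{h,(j)}\|_{L^{2}(P)} = \|X_{n}^{(j)} - X_{m}^{(j)}\|_{L^{2}(P)}$, so $(X_{n}^{h,(j)})_{n}$ is Cauchy with some limit $Y_{j} \in H$. A parallel computation, again using the left-invariance of $R$, gives $E[Y_{j} T(g)] = E[T^{h}(f_{j}) T(g)]$ for every $g \in G$. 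Since both $Y_{j}$ and $T^{h}(f_{j})$ lie in $H$, their difference is orthogonal to every generator of $H$ and hence vanishes, so $Y_{j} = T^{h}(f_{j})$. Letting $n\to\infty$ in $(X_{n}^{(j)})_{j} \stackrel{d}{=} (X_{n}^{h,(j)})_{j}$ (and using that coordinatewise $L^{2}(P)$-convergence implies joint convergence in distribution) produces the desired $(T(f_{j}))_{j} \stackrel{d}{=} (T^{h}(f_{j}))_{j}$.

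The main obstacle is precisely the identification of the limit $Y_{j} = T^{h}(f_{j})$ in the previous step. Without mean-square continuity, one cannot approximate $T(f)$ by pathwise Riemann sums of $T$ (such a convergence would require $R$ to be continuous on the diagonal, which is exactly the hypothesis we wish to avoid), so the whole argument must be carried out inside the Hilbert space $H$, with limits identified by inner-product (covariance) computations rather than by any pathwise regularity of $T$.
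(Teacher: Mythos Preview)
Your argument is correct and takes a genuinely different route from the paper. The paper proceeds by first truncating, setting $T_n = (T\wedge n)\vee(-n)$, so that dominated convergence gives $T_n(f)\to T(f)$ and $T_n^h(f)\to T^h(f)$ almost surely; this reduces matters to bounded fields, for which equality of all mixed moments $E[(\mathrm{Re}\,T(f))^p(\mathrm{Im}\,T(f))^q]$ suffices, and those moments are computed by a direct Fubini expansion into integrals of joint moments of $(T(x_1),\dots,T(x_{p+q}))$, which are invariant under $T\mapsto T^h$ by assumption~\textbf{(b)}. Your approach instead stays entirely inside the Hilbert space $H=\overline{\mathrm{span}}\{T(g):g\in G\}$: you approximate each $T(f_j)$ by finite linear combinations $X_n^{(j)}$, transfer the Cauchy property to the shifted combinations $X_n^{h,(j)}$ via the left-invariance of the second-moment kernel, and then identify the limit with $T^h(f_j)$ by matching inner products against every generator $T(g)$. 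The paper's truncation-plus-moments argument is shorter and sidesteps the limit-identification step you flag as the main obstacle; your $L^2$ approach is more structural, makes the role of the covariance kernel explicit, and does not rely on moments determining the distribution (boundedness is what makes that work in the paper's version).
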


\begin{proof}
For the sake of simplicity, we shall only deal with the case $m=1$; the general case follows along similar lines. For every $n>0$ let $T_{n}=T\wedge n\vee (-n)$,
which is a bounded random field, itself invariant. Now, if $f\in L^{2}(%
{G})$, by dominated convergence, 
\begin{equation*}
T_{n}(f)\overset{a.s.}{\underset{n\rightarrow \infty }{\rightarrow }}%
T(f)\quad \text{and }\quad T_{n}^{g}(f)\overset{a.s.}{\underset{n\rightarrow
\infty }{\rightarrow }}T^{g}(f)\ ,
\end{equation*}%
and therefore the convergence takes place also in distribution. Hence it is
sufficient to prove the statement under the additional assumption that $T$
is bounded. But in this case the two r.v.'s $T(f)$, $T^{h}(f)$ are
themselves bounded and in order to prove that they are equi-distributed it
is sufficient to show that they have the same moments, i.e. that, for every
choice of integers $p,q\geq 0$, 
\begin{equation*}
E[({\rm Re}T(f))^{p}({\rm Im}T(f))^{q}]=E[({\rm Re}T^{h}(f))^{p}({\rm Im}%
T^{h}(f))^{q}]\ .
\end{equation*}%
Now 
\begin{equation*}
E[({\rm Re}T(f))^{p}({\rm Im}T(f))^{q}]
\end{equation*}%
\begin{equation*}
=E\left( \int_{G}{\rm Re}(T(x_{1})\overline{f(x_{1})}%
)dx_{1}...\int_{G}{\rm Re}(T(x_{p})\overline{f(x_{p})})\int_{%
G}{\rm Im}(T(y_{1})\overline{f(y_{1})})dy_{1}...\int_{G}%
{\rm Re}(T(y_{q})\overline{f(y_{q})})dy_{q}\right) \text{ .}
\end{equation*}%
Developing the real and imaginary parts and applying Fubini's theorem, the
previous expectation reduces to a sum of terms of the form 
\begin{equation*}
\int_{G}\dots \!\!\int_{G}E\left[ b_{1}(x_{1})\dots
b_{p}(x_{p})c_{1}(y_{1})\dots c_{q}(y_{q})\right] d_{1}(x_{1})\dots
d_{p}(x_{p})e_{1}(y_{1})\dots e_{q}(y_{q})\,dx_{1}\dots dx_{m}dy_{1}\dots
dy_{k}
\end{equation*}%
where $b_{i}(x_{i})$ (resp. $c_{j}(y_{j})$) can be equal to ${\rm Re}%
T(x_{i})$ or ${\rm Im}T(x_{i})$ (resp. ${\rm Re}T(y_{j})$ or ${\rm Im}%
T(y_{j})$) and $d_{i}(x_{i})$ (resp. $e_{j}(y_{j})$) can be equal to $\pm 
{\rm Re}f(x_{i})$ or $\pm {\rm Im}f(x_{i})$ (resp $\pm {\rm Re}f(y_{j})$
or $\pm {\rm Im}f(y_{j})$). Now just remark that the quantity $E\left[
b_{1}(x_{1})\dots b_{p}(x_{p})c_{1}(y_{1})\dots c_{q}(y_{q})\right] $ does
not change if the random field $T$ is replaced by its rotated version $T^{h}$, as
such an expectation only depends on the joint distribution; this concludes the
proof.
\end{proof}

\section{Mean-square continuity on $G$}

In the next statement we show that isotropic fields such as those of the
previous section are necessarily mean-square continuous.

\begin{theorem}
\label{t:msc}Let $T$ be a square-integrable centered isotropic field on the
topological compact group $G$, verifying properties \textrm{\textbf{(a)}--%
\textbf{(c)}} of Section \ref{s:pw}. Then, $T$ is mean-square continuous:
for every $g\in G$ 
\begin{equation}  \label{e:msc}
\lim_{h\to g} E\left| T(g) - T(h)\right|^2 = 0.
\end{equation}
\end{theorem}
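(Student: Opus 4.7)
The plan is to approximate $T$ by the partial sums $S_{n}(g,\omega):=\sum_{k=1}^{n}T^{\pi_{k}}(g,\omega)$ from the Peter--Weyl decomposition, prove that each $S_{n}$ is mean-square continuous, and control the tail $T-S_{n}$ uniformly in $g$ using isotropy. The conclusion then follows from the triangle inequality
\begin{equation*}
\|T(g)-T(h)\|_{L^{2}(P)}\leq \|T(g)-S_{n}(g)\|_{L^{2}(P)}+\|S_{n}(g)-S_{n}(h)\|_{L^{2}(P)}+\|S_{n}(h)-T(h)\|_{L^{2}(P)}
\end{equation*}
by first choosing $n$ large and then letting $h\to g$.

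For the uniformity of the tail, I would exploit the joint isotropy of the enlarged family $\{T,T^{\pi_{k}}:k\geq 1\}$ highlighted in Section \ref{s:pw}: for every $h\in G$ the vector $(T(g),T^{\pi_{1}}(g),\dots ,T^{\pi_{n}}(g))$ has the same distribution as $(T(hg),T^{\pi_{1}}(hg),\dots ,T^{\pi_{n}}(hg))$. Consequently the function $R_{n}(g):=E|T(g)-S_{n}(g)|^{2}$ is constant on $G$, and the first identity in (\ref{e:convg}) gives $R_{n}\equiv R_{n}(e)\to 0$ as $n\to \infty$. This is the step I expect to be the main conceptual obstacle, since without isotropy one would only have pointwise convergence and the tail could conceivably be large at the specific $g$ under consideration; once this uniformity is in place the rest is routine.

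For the mean-square continuity of $S_{n}$ at fixed $n$, I would use that $T^{\pi_{k}}(g,\omega)\in M_{\pi_{k}}\subseteq C(G)$. Explicitly,
\begin{equation*}
T^{\pi_{k}}(g,\omega)=\sum_{i,j=1}^{d_{\pi_{k}}}a_{ij}^{k}(\omega)\,\pi_{k,ij}(g),\qquad a_{ij}^{k}(\omega):=d_{\pi_{k}}\int_{G}T(x,\omega)\overline{\pi_{k,ij}(x)}\,dx,
\end{equation*}
and by the Cauchy--Schwarz inequality together with $\|\pi_{k,ij}\|_{G}^{2}=1/d_{\pi_{k}}$ one obtains $E|a_{ij}^{k}|^{2}\leq d_{\pi_{k}}E[T^{2}(e)]<\infty$. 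Since each matrix coefficient $\pi_{k,ij}$ is continuous on $G$ and the sum defining $S_{n}$ is finite, the dominated convergence theorem yields $E|S_{n}(g)-S_{n}(h)|^{2}\to 0$ as $h\to g$.

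Putting the three pieces together: given $\varepsilon>0$, fix $n$ so that $R_{n}<\varepsilon$, and then choose a neighborhood of $g$ on which $E|S_{n}(g)-S_{n}(h)|^{2}<\varepsilon$; the triangle inequality above then forces $E|T(g)-T(h)|^{2}\leq (2\sqrt{\varepsilon}+\sqrt{\varepsilon})^{2}$, which proves (\ref{e:msc}).
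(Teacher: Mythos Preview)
Your argument is correct and follows essentially the same route as the paper: approximate $T$ by the Peter--Weyl partial sums $S_n$, use isotropy (exactly as in (\ref{e:convg})) to make the remainder $E|T(g)-S_n(g)|^2$ constant in $g$ and small for large $n$, and then use the continuity of the matrix coefficients $\pi_{k,ij}$ to handle the finite-dimensional piece. The only cosmetic differences are that the paper works with the squared triangle inequality (picking up a factor $3$) and invokes the orthogonality of the $T^{\pi_k}$ to split the cross term, whereas you stay in $L^2(P)$-norm and treat $S_n$ as a single finite sum; neither change affects the substance.
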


\begin{remark}
\textrm{Since $(G,\mathbb{G})$ is a topological space, equation (\ref{e:msc}%
) has to be interpreted in the following sense: for every net $%
\{h_i\}\subset G$ converging to $g$, one has that $E\left| T(g) -
T(h_i)\right|^2 \to 0$. See \cite[p. 28ff]{Dudley book} for details on these
notions. }
\end{remark}

\noindent\textbf{Proof of Theorem \ref{t:msc}}. Let $\{[\pi_k] :k\geq 1\}$
be an arbitrary enumeration of $\widehat{G}$. In view of (\ref{e:convg}),
for every $\epsilon >0$, there exists $n\geq 1$ such that 
\begin{equation*}
\sup_{g\in G} E\left\{\left\vert
T(g)\!-\!\sum_{k=1}^{n}T^{\pi_k}(g)\right\vert^2 \right\}\leq \frac{\epsilon%
}{6},
\end{equation*}
in such a way that, for every $h,g\in G$, 
\begin{eqnarray*}
E\left| T(g) - T(h)\right|^2 &\leq& 3\left\{ E\left[\left\vert
T(g)\!-\!\sum_{k=1}^{n}T^{\pi_k}(g)\right\vert^2\right]+E\left[\left\vert
T(h)\!-\!\sum_{k=1}^{n}T^{\pi_k}(h)\right\vert^2 \right] \right\} \\
&& + 3E\left[\left\vert\sum_{k=1}^{n}(T^{\pi_k}(g) -T^{\pi_k}(h))
\right\vert^2\right] \\
&&\leq \epsilon +3E\left[\left\vert\sum_{k=1}^{n}(T^{\pi_k}(g)
-T^{\pi_k}(h)) \right\vert^2\right] = \epsilon +3\sum_{k=1}^n E\left[%
\left\vert T^{\pi_k}(g) -T^{\pi_k}(h) \right\vert^2\right],
\end{eqnarray*}
where in the last relation we have used the fact that, for $k\neq k^{\prime
} $, the two fields $T^{\pi_k}$ and $T^{\pi_{k^{\prime }}}$ are uncorrelated
(see \cite[Proposition 5.4]{MarPecBook}). Now, for every $[\pi]\in \widehat{G%
}$, we define $\widehat{T}_{i,j}^\pi := \int_G T(h) \bar{\pi}_{i,j}(h)dh$
(it is easily seen that $\widehat{T}_{i,j}^\pi$ is a square-integrable
random variable). Using (\ref{e:op}), we therefore deduce that, for every $%
k\geq 1$, 
\begin{equation*}
E\left[\left\vert T^{\pi_k}(g) -T^{\pi_k}(h) \right\vert^2\right]\leq
d_{\pi_k}^2 \sum_{i,j=1}^{d_{\pi_k}} E[|\widehat{T}_{i,j}^{\pi_k}|^2] \times
|\pi_{i,j}(g) - \pi_{i,j}(h)|^2.
\end{equation*}
Since $\pi_{i,j}\in C(G)$, this last relation together with the previous
estimates implies that 
\begin{equation*}
\limsup_{h\to g} E\left| T(g) - T(h)\right|^2\leq \epsilon,
\end{equation*}
and the conclusion follows from the fact that $\epsilon$ is arbitrary. $%
\blacksquare$

\section{Mean-square continuity on homogeneous spaces}

\label{s:homs}

We now fix a topological compact group $G$, and consider a topological space 
$X$ which is also a $G$-homogeneous space, where $G$ acts transitively and
continuously from the left. Let $T = \{T(x) : x\in X\}$ be a centered,
finite-variance isotropic random field on $X$. This means that the following
three properties are verified: \textbf{(i)} the field $T$ is defined on a
probability space $(\Omega ,\mathcal{F} ,P)$, and the mapping $T:X \times
\Omega \rightarrow \mathbb{R} :(x,\omega )\mapsto T(x,\omega )$ is $\mathcal{%
X} \otimes \mathcal{F} $-measurable, with $\mathcal{X}$ denoting the Borel $%
\sigma$-field associated with $X$; \textbf{(ii)} for every $h\in G$, $T(hx)%
\overset{d}{=}T(x),$ where \textquotedblleft $\overset{d}{=}$%
\textquotedblright\ indicates as before equality in distribution in the
sense of stochastic processes; \textbf{(iii)} $ET^{2}(x)=\int T^{2}(x,\omega
)dP(\omega )<\infty$ and $ET(x)=0$, for every $x\in X$. Plainly isotropic
fields as those introduced in Section 3 are special cases of the above
class, obtained by taking $X=G$. To simplify the discussion, in what follows
we implicitly assume that both $X$ and $G$ are metric spaces. The following
result shows that the content of Theorem \ref{t:msc} extends to random
fields defined on $X$.

\begin{theorem}
\label{t:msch} Let $T$ be a square-integrable centered isotropic field on
the $G$-homogeneous space $X$, verifying properties \textrm{\textbf{(i)}--%
\textbf{(iii)}} above. Then, $T$ is mean-square continuous: for every $x\in
X $, 
\begin{equation}  \label{e:msch}
\lim_{y\to x} E\left| T(y) - T(x)\right|^2 = 0.
\end{equation}
\end{theorem}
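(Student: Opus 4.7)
The plan is to lift the field $T$ from $X$ to a field $\widetilde{T}$ on $G$ satisfying the hypotheses of Theorem \ref{t:msc}, apply that theorem, and then push the mean-square continuity back down to $X$ via a compactness argument, exploiting the fact that $G$ is compact and $X,G$ are metric.

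Concretely, fix a base point $x_{0}\in X$ and define
\begin{equation*}
\widetilde{T}(g,\omega) := T(g\cdot x_{0},\omega), \qquad g\in G,\ \omega\in\Omega.
\end{equation*}
The first step is to verify that $\widetilde{T}$ is a centered, finite-variance, isotropic random field on $G$ in the sense of properties \textbf{(a)}--\textbf{(c)} of Section \ref{s:pw}. Joint $\mathcal{G}\otimes\mathcal{F}$-measurability follows from the joint continuity of the action $g\mapsto g\cdot x_{0}$ combined with the joint measurability of $T$ on $X\times\Omega$. Isotropy on $G$: for every $h\in G$ and $g_{1},\dots,g_{d}\in G$, the vector $(\widetilde{T}(hg_{1}),\dots,\widetilde{T}(hg_{d}))=(T(h\cdot(g_{1}\cdot x_{0})),\dots,T(h\cdot(g_{d}\cdot x_{0})))$ has the same distribution as $(T(g_{1}\cdot x_{0}),\dots,T(g_{d}\cdot x_{0}))=(\widetilde{T}(g_{1}),\dots,\widetilde{T}(g_{d}))$ by hypothesis \textbf{(ii)}. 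Centering and finite variance are inherited. Theorem \ref{t:msc} thus applies and yields mean-square continuity of $\widetilde{T}$ on $G$.

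Now comes the transfer back to $X$. Fix $x\in X$ and a sequence $y_{n}\to x$; by transitivity pick $g\in G$ with $g\cdot x_{0}=x$ and $h_{n}\in G$ with $h_{n}\cdot x_{0}=y_{n}$. The difficulty is that $h_{n}$ need not converge to $g$ even though $y_{n}\to x$, because the orbit map $G\to X$, $g\mapsto g\cdot x_{0}$, admits in general no continuous section. The remedy is to argue by subsequences: since $G$ is compact metric, any subsequence of $\{h_{n}\}$ has a further subsequence $h_{n_{k}}\to h^{\ast}$ for some $h^{\ast}\in G$; by joint continuity of the action, $y_{n_{k}}=h_{n_{k}}\cdot x_{0}\to h^{\ast}\cdot x_{0}$, and since $y_{n_{k}}\to x$ as well we conclude $h^{\ast}\cdot x_{0}=x$. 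Applying the mean-square continuity of $\widetilde{T}$ on $G$ along $h_{n_{k}}\to h^{\ast}$ gives
\begin{equation*}
E|T(y_{n_{k}})-T(x)|^{2} = E|\widetilde{T}(h_{n_{k}})-\widetilde{T}(h^{\ast})|^{2} \longrightarrow 0.
\end{equation*}
A standard sub-subsequence argument then upgrades this to $E|T(y_{n})-T(x)|^{2}\to 0$: if not, some subsequence stays bounded away from $0$, contradicting the existence of a further sub-subsequence tending to $0$ produced by the compactness extraction above.

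The main obstacle is precisely this lifting step, not the algebraic reduction: one cannot expect a continuous lift of $y_{n}\to x$ into a convergent sequence in $G$, so the proof must proceed through compactness extraction of a limit point $h^{\ast}$ stabilizing $x$ (rather than necessarily equal to the originally chosen $g$), together with the subsequence principle to make the conclusion uniform over all approaches $y\to x$. Everything else is formal once $\widetilde{T}$ is verified to sit within the scope of Theorem \ref{t:msc}.
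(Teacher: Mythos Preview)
Your proof is correct and follows the same overall strategy as the paper: lift $T$ to an isotropic field on $G$, invoke Theorem \ref{t:msc}, and descend via a compactness/sub-subsequence argument. The paper packages the compactness step into a separate lemma (Lemma \ref{l:trapani}): taking $x$ itself as the base point, it extracts a convergent subsequence $g'_n\to g_0$ of lifts and then applies the conjugation $h_n = g'_n g_0^{-1}(g'_n)^{-1}$ to force the resulting group elements $h_n g'_n$ to converge to the identity $e$, so that one can compare against $T_x(e)=T(x)$. Your version sidesteps this conjugation entirely: working with a fixed base point $x_0$, you simply note that whatever subsequential limit $h^\ast$ comes out of compactness must satisfy $h^\ast\cdot x_0 = x$ by continuity of the action, whence $\widetilde{T}(h^\ast)=T(x)$ no matter which element of the isotropy coset $h^\ast$ happens to be. This is a genuine, if modest, streamlining --- the paper's lemma does extra work to pin down a specific limit ($e$) when any representative of the fibre over $x$ would do.
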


The proof of Theorem \ref{t:msch} is based on the following lemma.

\begin{lemma}
\label{l:trapani} Let $\left\{ x_{n} \right\}\subset X$ be a sequence
converging to $x_{0}\in X$ in the topology of the homogeneous space (written 
$x_{n}\rightarrow _{X}x_{0}$). Then, there exists a subsequence $\left\{
x^{\prime }_{n} \right\}\subset \left\{ x_{n} \right\}$ verifying the
following property: there exists a sequence $\left\{ g_{n} \right\}\subset G 
$ such that $g_{n}\cdot x_{0}=x_{n}^{\prime }$ for every $n$, and $%
g_{n}\rightarrow _{G} e,$ where $e$ denotes the identity element of $G$.
\end{lemma}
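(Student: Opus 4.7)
The plan is to exploit the compactness of $G$ together with transitivity of the action and the Hausdorff (metric) property of $X$. First, by transitivity, for each $n$ I can pick some $h_{n}\in G$ with $h_{n}\cdot x_{0}=x_{n}$. The sequence $\{h_{n}\}$ lives in the compact metrizable group $G$, so I can extract a subsequence $h_{n_{k}}\to h_{\infty}$ for some $h_{\infty}\in G$.

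Next, I would show that $h_{\infty}$ stabilizes $x_{0}$. Indeed, by joint continuity of the action $(g,x)\mapsto g\cdot x$, the convergence $h_{n_{k}}\to h_{\infty}$ in $G$ together with the (trivial) convergence $x_{0}\to x_{0}$ yields $h_{n_{k}}\cdot x_{0}\to h_{\infty}\cdot x_{0}$ in $X$. But by construction $h_{n_{k}}\cdot x_{0}=x_{n_{k}}\to x_{0}$ in $X$, and $X$ is Hausdorff, so $h_{\infty}\cdot x_{0}=x_{0}$. The group action axioms then force $h_{\infty}^{-1}\cdot x_{0}=x_{0}$ as well (apply $h_{\infty}^{-1}$ to both sides of $h_{\infty}\cdot x_{0}=x_{0}$).

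Finally, define the subsequence $x'_{k}:=x_{n_{k}}$ together with $g_{k}:=h_{n_{k}}h_{\infty}^{-1}$. Since $h_{\infty}^{-1}$ fixes $x_{0}$, one has
\[
g_{k}\cdot x_{0}=h_{n_{k}}\cdot(h_{\infty}^{-1}\cdot x_{0})=h_{n_{k}}\cdot x_{0}=x_{n_{k}}=x'_{k},
\]
as required. Moreover, continuity of multiplication and inversion in $G$ gives $g_{k}=h_{n_{k}}h_{\infty}^{-1}\to h_{\infty}h_{\infty}^{-1}=e$, which concludes the argument.

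There is no genuinely hard step: the whole content of the lemma is that, although the lifts $h_{n}$ themselves need not converge to $e$ (they may drift within the stabilizer of $x_{0}$), one can always correct them by a fixed stabilizer element $h_{\infty}^{-1}$ obtained as a subsequential limit. The most delicate point to watch is logical rather than technical, namely making sure that the joint continuity of the action (not merely separate continuity), together with second countability/metrizability, is what legitimizes working with sequences instead of nets in the Hausdorff argument.
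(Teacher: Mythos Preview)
Your proof is correct and follows essentially the same approach as the paper: lift by transitivity, extract a convergent subsequence by compactness, observe the limit lies in the stabilizer of $x_{0}$, and right-multiply by its inverse. In fact your presentation is slightly more direct, since the paper introduces the conjugated element $h_{n}=g_{n}'g_{0}^{-1}(g_{n}')^{-1}$ and sets $g_{n}=h_{n}g_{n}'$, which simplifies to $g_{n}'g_{0}^{-1}$---precisely your $h_{n_{k}}h_{\infty}^{-1}$.
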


\noindent

\begin{proof}
By transitivity, there exists a sequence $\{g_{n}^{\prime \prime }\}$ such
that $g_{n}^{\prime \prime }\cdot x_{0}\equiv x_{n}$ for every $n$.
Moreover, because the group is compact, this sequence admits a subsequence $%
\left\{ g_{n}^{\prime }\right\} \subset \left\{ g_{n}^{\prime \prime
}\right\} $ such that $g_{n}^{\prime }\rightarrow _{G}g_{0}.$ Note that $%
g_{0}$ need not be the identity (otherwise the proof would be finished), but
it does need to belong to the isotropy group of $x_{0}$, written $g_{0}\in
I\!so(x_{0})$, meaning that $g_0\cdot x_0 = x_0$. Write $x^{\prime }_n =
g^{\prime }_n\cdot x_0$. We claim that there exists a sequence $\{h_{n}\}
\subset G$ such that the sequence $g_{n}:=h_{n}g_{n}^{\prime }$, $n\geq 1$,
satisfies the following two properties:

\begin{equation*}
\mathbf{(A)}: \quad \text{ }g_{n}\cdot x_{0}=h_{n}g_{n}^{\prime }\cdot
x_{0}=g_{n}^{\prime }\cdot x_{0}=x_{n}^{\prime }\text{ (that is, }%
h_{n}g_{n}^{\prime }\in I\!so(x_{n}^{\prime }))
\end{equation*}%
and%
\begin{equation*}
\mathbf{(B)}: \quad \text{ }g_{n}=h_{n}g_{n}^{\prime }\rightarrow _{G}e\text{
.}
\end{equation*}%
Such a sequence is given by $h_{n}:=g_{n}^{\prime }g_{0}^{-1}(g_{n}^{\prime
})^{-1}.$ Indeed, it is immediate to see that%
\begin{eqnarray*}
h_{n}g_{n}^{\prime }\cdot x_{0} &=&g_{n}^{\prime }g_{0}^{-1}(g_{n}^{\prime
})^{-1}g_{n}^{\prime }\cdot x_{0}=g_{n}^{\prime }g_{0}^{-1}\cdot x_{0} \\
&=&g_{n}^{\prime }\cdot x_{0}=x^{\prime }_{n}\text{ ,}
\end{eqnarray*}%
where we have exploited the trivial fact that $g_{0}^{-1}$ is an element of $%
I\!so(x_{n}),$ because $g_{0}$ is. Hence, $\mathbf{(A)}$ is fulfilled.
Moreover, since $g_{n}^{\prime }\rightarrow _{{G}}g_{0},$ by continuity one
infers that%
\begin{equation*}
h_{n}\rightarrow _{{G}}g_{0}^{-1}, \text{ and consequently }%
h_{n}g_{n}^{\prime }\rightarrow _{{G}}e\text{,}
\end{equation*}%
yielding \textbf{(B)}. It follows that the sequence $\{x^{\prime }_n\}$
satisfies the requirements in the statement, and the proof is concluded.
\end{proof}

\medskip

\noindent\textbf{Proof of Theorem \ref{t:msch}} Fix $x\in X$ and let $x_n
\to_X x$. Using the assumptions on $T$, one infers that the mapping $g
\mapsto T(g\cdot x) := T_x(g)$ is a centered finite-variance isotropic field
on $G$, in the sense of Section 3. According to Lemma \ref{l:trapani}, there
exist sequences $\{x^{\prime }_n\}\subset \{x_n\}$ and $\{g_n\}\subset G$
such that $x^{\prime }_n =g_n x$ and $g_n \to_G e$. By virtue of Theorem \ref%
{t:msc}, we therefore conclude that 
\begin{equation*}
\lim_{n\to\infty} E\left| T(x^{\prime }_n) - T(x)\right|^2=
\lim_{n\to\infty} E\left| T_x(g_n) - T_x(e)\right|^2 = 0.
\end{equation*}
This argument shows that every sequence $\{x_n\}$ converging to $x$ in $X$
admits a subsequence $\{x^{\prime }_n\}$ such that $T(x^{\prime }_n)$
converges to $T(x)$ in $L^2(P)$, and this fact is exactly equivalent to
relation (\ref{e:msch}). $\blacksquare$

\bigskip

As already recalled, our findings apply to the important case where $X$
equals the $n$ dimensional unit sphere $S^{n}$, $n\geq 1$, on which the
compact group $SO(n+1)$ acts transitively. Assume for notational simplicity
that $T$ is zero-mean ($ET=0$), and write $\Gamma
(x_{1},x_{2}):=ET(x_{1})T(x_{2})$ for the covariance function of the random
field, $\Gamma :S^{n}\times S^{n}\rightarrow \mathbb{R}$ . By isotropy,
there exists a function $\widetilde{\Gamma }(.):\mathbb{R}^{+}\rightarrow 
\mathbb{R}$ such that $\Gamma (x_{1},x_{2})=\widetilde{\Gamma }(\left\Vert
x_{1}-x_{2}\right\Vert )$, where the symbol $\Vert \cdot \Vert $ stands for
Euclidean norm; it is hence straightforward (and well-known, see for
instance \cite{adlertaylor, Leon, Yad}) that, under isotropy, mean-square
continuity is equivalent to continuity of the function $\widetilde{\Gamma }$
at the origin.

Of course, it is not difficult to figure out rotationally invariant,
positive-definite functions which violate the continuity of $\widetilde{%
\Gamma }$ at the origin: a simple example is provided by $\Gamma
:S^{n}\times S^{n}\rightarrow \mathbb{R}$ such that%
\begin{equation}
\Gamma (x,x)=\widetilde{\Gamma }(0)=1\text{ , }\Gamma (x_{1},x_{2})=%
\widetilde{\Gamma }(\left\Vert x_{1}-x_{2}\right\Vert )=0\text{ , for all }%
x_{1}\neq x_{2}\text{ , }x_{1},x_{2}\in S^{n}\text{ . }  \label{pic}
\end{equation}%
A consequence of Theorem \ref{t:msch} is that such a $\Gamma (.,.)$ cannot
be the covariance function of a measurable isotropic process. More
generally, under isotropy it is immediate to see that continuity of $%
\widetilde{\Gamma }$ at the origin entails continuity everywhere of the
covariance function $\Gamma$, indeed%
\begin{eqnarray*}
\Gamma (x_1,y_{1})-\Gamma (x_2,y_{2})&=&E\left\{
T(x_1)(T(y_{1})-T(y_{2}))\right\} +E\left\{ T(y_2)(T(x_{1})-T(x_{2}))\right\}
\\
&\leq& \sqrt{ET^{2}(x_1)E(T(y_{1})-T(y_{2}))^{2}} + \sqrt{%
ET^{2}(y_2)E(T(x_{1})-T(x_{2}))^{2}},
\end{eqnarray*}
and the last term of the previous chain of inequalities converges to zero,
whenever $(x_1,y_1)\to (x_2,y_2)$ and the isotropic field $T$ is mean-square
continuous (or, equivalently, $\widetilde{\Gamma}$ is continuous at the
origin). We can hence state the following:

\begin{corollary}
The covariance function of a measurable finite-variance isotropic random
field on the homogenous space of a compact group is necessarily everywhere
continuous.
\end{corollary}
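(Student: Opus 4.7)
The plan is to combine Theorem \ref{t:msch} with a standard Cauchy--Schwarz argument, essentially the one already sketched in the paragraph preceding the corollary statement. All the deep work sits in Theorem \ref{t:msch}; once mean-square continuity of $T$ is in hand, the passage to joint continuity of the covariance function is routine.

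First, I would invoke Theorem \ref{t:msch} to conclude that $T$ is mean-square continuous on $X$: for every $x\in X$ and every sequence $y_n\to_X x$ (sequences suffice since $X$ is metric), one has $E|T(y_n)-T(x)|^2\to 0$. Second, to deduce joint continuity of $\Gamma(x,y):=E[T(x)T(y)]$, I would fix $(x_2,y_2)\in X\times X$ and let $(x_1,y_1)\to(x_2,y_2)$ in the product topology. Using bilinearity of the inner product in $L^2(P)$, I would write
\[
\Gamma(x_1,y_1)-\Gamma(x_2,y_2) = E\bigl[T(x_1)(T(y_1)-T(y_2))\bigr] + E\bigl[T(y_2)(T(x_1)-T(x_2))\bigr],
\]
and then apply Cauchy--Schwarz to each summand to get
\[
|\Gamma(x_1,y_1)-\Gamma(x_2,y_2)| \le \sqrt{E T^2(x_1)\,E(T(y_1)-T(y_2))^2} + \sqrt{E T^2(y_2)\,E(T(x_1)-T(x_2))^2}.
\]

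The prefactors $ET^2(x_1)$ and $ET^2(y_2)$ are in fact equal to a common constant $\sigma^2:=ET^2(x)$ by isotropy (this is the one small observation worth highlighting, since it is what keeps the bound uniform along the converging sequence), while the remaining factors tend to zero by the mean-square continuity of $T$. This yields $\Gamma(x_1,y_1)\to\Gamma(x_2,y_2)$, proving the corollary.

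I do not anticipate any real obstacle: the entire content of the corollary is Theorem \ref{t:msch} plus a two-line Cauchy--Schwarz estimate. The only conceptual point to mention is that the statement is genuinely nontrivial in contrast to the pathological example (\ref{pic}), which is ruled out precisely because such a $\Gamma$ cannot arise from a \emph{measurable} isotropic process; the measurability assumption is what activates Theorem \ref{t:msch} and forces continuity everywhere.
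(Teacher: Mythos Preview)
Your proposal is correct and follows essentially the same route as the paper: invoke Theorem \ref{t:msch} to get mean-square continuity, then use the bilinear decomposition and Cauchy--Schwarz exactly as in the displayed inequality preceding the corollary. Your explicit remark that $ET^2(x_1)=ET^2(y_2)=\sigma^2$ by isotropy is a small clarification the paper leaves implicit, but otherwise the arguments coincide.
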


\section{Some Historical Remarks}

Our result can be viewed as a characterization of covariance functions for
random fields defined on homogenous spaces of compact groups. In a related
setting, the characterization of covariance functions for stationary and
isotropic random fields in $\mathbb{R}^{d}$ was first considered in a
celebrated paper by Schoenberg (1938), see \cite{Schoenberg}. In this
reference, it was conjectured that the only form of discontinuity which
could be allowed for such covariance functions would occur at the origin,
i.e. given any zero-mean, finite-variance and isotropic random field $Z:%
\mathbb{R}^{d}\rightarrow \mathbb{R},$ its covariance function should be of
the form 
\begin{eqnarray*}
EZ(0,...,0)Z(t_{1},...,t_{d}) &=&\Gamma (\left\Vert
t_{1},...,t_{d}\right\Vert ) \\
&=&\widetilde{\Gamma }(t)=\widetilde{\Gamma }_{0}(t)+\widetilde{\Gamma }%
_{1}(t)\text{ ,}
\end{eqnarray*}%
where as before $t:=\left\Vert t_{1},...,t_{d}\right\Vert $ is Euclidean
norm, and $\widetilde{\Gamma }_{0}(.)$,$\widetilde{\Gamma }_{1}(.):\mathbb{R}%
^{+}\rightarrow \mathbb{R}$ are such that%
\begin{equation*}
\widetilde{\Gamma }_{0}(t)=\left\{ 
\begin{array}{l}
\gamma \geq 0\text{ , for }t=0\text{ ,} \\ 
0\text{ otherwise ,}%
\end{array}%
\right.
\end{equation*}%
while $\widetilde{\Gamma }_{1}(t)$ is nonnegative definite and continuous.
In a later paper which went largely unnoticed, Crum (1956) (\cite{Cru})
proved the conjecture to be right for $d\geq 2.$ This result was drawn to
the attention of the Geostatistics community by Gneiting and Sasvari in 1997
(see \cite{Gneiting}) who argued then that isotropic random fields could be
always expressed as a mean-square continuous component plus a "nugget
effect", e.g. a purely discontinuous component. The fact that this latter
component should be necessarily non-measurable was pointed out (for
instance) in an oral presentation by Starkloff (2009) -- see \cite[p. 13]%
{Starckloff}, as well as Example 1.2.5 in \cite{Kallianpur}. Our results in
this note, though, are obtained in a somewhat different setting (e.g.
homogeneous spaces of compact groups), and we leave for future research a
complete analysis of the relationship between measurability and mean-square
continuity in non-compact circumstances.


\begin{thebibliography}{99}
\bibitem{Adler} R.J. Adler (1981). \textit{The Geometry of Random Fields,}%
\emph{\ }J.Wiley.

\bibitem{adlertaylor} R.J. Adler and J.E. Taylor (2007).\textbf{\ }\textit{%
Random Fields and Geometry,}\emph{\ }Springer.

\bibitem{BaMa} P. Baldi and D. Marinucci (2007). Some characterizations of
the spherical harmonics coefficients for isotropic random fields. \textit{%
Statistics and Probability Letters }77 (5), 490-496.

\bibitem{BMV} P. Baldi, D. Marinucci and V.S. Varadarajan (2007). On the
characterization of isotropic Gaussian fields on homogeneous spaces of
compact groups, \textit{Electron. Comm. Probab.} 12, 291--302.

\bibitem{Cru} M.M. Crum (1956). On positive-definite functions, \textit{%
Proc. London Math. Soc.} (3) 6, 548-560.

\bibitem{dodelson} S. Dodelson (2003). \textit{Modern Cosmology,}\emph{\ }%
Academic Press.

\bibitem{Dudley book} R.M.\ Dudley (2003). \textit{Real Analysis and
Probability }(2$^{\text{nd}}$ Edition). Cambridge University Press.

\bibitem{DuiKolk} J.J. Duistermaat and J.A.C. Kolk (1997). \textit{Lie groups%
}. Springer.

\bibitem{durrer} R. Durrer (2008). \textit{The Cosmic Microwave Background},
Cambridge University Press.

\bibitem{Gneiting} T. Gneiting and Z. Sasv\'{a}ri (1999). The
characterization problem for isotropic covariance functions, \textit{Math.
Geol.} 31 (1999), no. 1, 105--111.

\bibitem{JamesLiebeck} G.D. James and M. Liebeck (2001). \textit{%
Representations and Characters of Groups} (2nd edition). Cambridge
University Press.

\bibitem{Kallianpur} G. Kallianpur (1980). \textit{Stochastic filtering
theory}. Applications of Mathematics, 13. Springer.

\bibitem{Leon} N. Leonenko (1999). \textit{Limit Theorems for Random Fields
with Singular Spectrum}. Kluwer.

\bibitem{MarPecBook} D. Marinucci, and G. Peccati (2011). \textit{Random
Fields on the Sphere: Representations, Limit Theorems and Cosmological
Applications,}\emph{\ }Cambridge University Press.

\bibitem{Obhoukov} A. M. Obukhov (1947). Statistically homogeneous random
fields on a sphere. Uspehi Mat. Nauk 2, no. 2, 196--198.

\bibitem{PePy} G. Peccati and J.-R. Pycke (2009). Decompositions of
stochastic processes based on irreducible group representations. Teor.
Veroyatn. Primen. 54, no. 2, 304--336; translation in Theory Probab. Appl.
54 (2010), no. 2, 217--245.

\bibitem{Schoenberg} I.J. Schoenberg (1938). Metric spaces and positive
definite functions. \textit{Trans. Amer. Math. Soc.} 44, no. 3, 522--536.

\bibitem{Starckloff} H.J. Starkloff (2009). \ Notes on Stationary Processes
and Fields, Oral Presentation, available online at
http://whz-cms-10.zw.fh-zwickau.de/hast/Texte/starkloff\_frbg20090915.pdf

\bibitem{Varadara} V.S.Varadarajan (1999). \textit{An Introduction to
Harmonic Analysis on Semisimple Lie Groups}, Corrected reprint of the 1989
original, Cambridge University Press.

\bibitem{VMK} D.A. Varshalovich, A.N. Moskalev and V.K. Khersonskii (1988). 
\textit{Quantum Theory of Angular Momentum},\emph{\ }World Scientific Press.

\bibitem{VilKly} N.Ja. Vilenkin and A.U. Klimyk (1991). \textit{%
Representation of Lie Groups and Special Functions}, Kluwer.

\bibitem{Yad} M.I. Yadrenko (1983). \textit{Spectral Theory of Random
Fields. }Optimization Software, Inc.

\bibitem{Yaglom} A. M. Yaglom (1961). Second-order homogeneous random
fields. In Proc. 4th Berkeley Sympos. Math. Statist. and Prob., vol. II, pp.
593--622, Berkeley, CA, Univ. California Press.
\end{thebibliography}
\end{document}